\newtheorem{thm}{Theorem}[section]
\newtheorem{lem}[thm]{Lemma}
\newtheorem{prop}[thm]{Proposition}
\newtheorem{cor}[thm]{Corollary}
\newtheorem{defn}[thm]{Definition}
\newenvironment{customthm}[1]
  {\innercustomthm}
  {\endinnercustomthm}
\newcommand{\F}{\mathcal{F}}
\def\sm{{\backslash}}
\newcommand{\ZZ}{\mathbb{Z}}
\newcommand{\NN}{\mathbb{N}}
\newcommand{\eps}{\varepsilon}
\renewcommand{\c}{{\rm c}}
\title{$\ell$-covering $k$-hypergraphs are quasi-eulerian}
\author{Mateja \v{S}ajna\footnote{Corresponding author. Email: msajna@uottawa.ca. Mailing address: Department of Mathematics and Statistics, University of Ottawa, 150 Louis-Pasteur Private, Ottawa, ON, K1N 6N5, Canada.} \; and Andrew Wagner \\ {\small University of Ottawa}}
\begin{document}

\maketitle
\begin{abstract}
An {\em Euler tour} in a hypergraph $H$ is a closed walk that traverses each edge of $H$ exactly once, and an {\em Euler family} is a family of closed walks that jointly traverse each edge of $H$ exactly once. An {\em $\ell$-covering $k$-hypergraph}, for $2 \le \ell < k$, is a $k$-uniform hypergraph in which every $\ell$-subset of vertices lie together in at least one edge.

In this paper we prove that every $\ell$-covering $k$-hypergraph, for $k \ge 3$, admits an Euler family.

\medskip
\noindent {\em Keywords:} $\ell$-covering hypergraph; Euler family; Euler tour; Lovasz's $(g,f)$-factor theorem.
\end{abstract}

%\tableofcontents

\baselineskip 18pt

\section{Introduction}

The complete characterization of graphs that admit an Euler tour is a classic result covered by any introductory graph theory course. The concept naturally extends to hypergraphs; that is, an Euler tour of a hypergraph is a closed walk that traverses every edge exactly once.  However, the study of eulerian hypegraphs is a much newer and largely unexplored territory.

The first results on Euler tours in hypergraphs were obtained by Lonc and Naroski \cite{LN}. Most notably, they showed that the problem of existence of an Euler tour is NP-complete on the set of $k$-uniform hypergraphs, for any $k\geq 3$, as well as when restricted to a particular subclass of 3-uniform hypergraphs.

Bahmanian and \v{S}ajna \cite{BS2} attempted a systematic study of eulerian properties of general hypergraphs; some of their techniques and results will be used in this paper. In particular, they introduced the notion of an {\em Euler family} --- a collection of closed walks that jointly traverse each edge exactly once --- and showed that the problem of existence of an Euler family is polynomial on the class of all hypergraphs.

In this paper, we define an {\em $\ell$-covering $k$-hypergraph}, for $2 \le \ell < k$, to be a non-empty $k$-uniform hypergraph in which every $\ell$-subset of vertices appear together in at least one edge.

In  \cite{BS2}, the authors proved that every 2-covering 3-hypergraph with at least two edges admits an Euler family, and the present authors gave a short proof \cite{SW} to show that every triple system --- that is, a 3-uniform hypergraph in which every pair of vertices lie together in the same number of edges --- admits an Euler tour as long as it has at least two edges. Most recently, the present authors proved the following result.

\medskip

\begin{thm}\label{thm:coveringinduction}\cite{SW1}
Let $k \ge 3$, and let $H$ be a $(k-1)$-covering $k$-hypergraph.  Then $H$ admits an Euler tour if and only if it has at least two edges.
\end{thm}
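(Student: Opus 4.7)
The necessity of $|E(H)| \ge 2$ is immediate: with a single edge $e$, any closed walk traversing $e$ would be of the form $v_0, e, v_1$ with $v_0 = v_1$, contradicting the rule that consecutive anchors in a walk must differ.

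For sufficiency, assume $|E(H)| \ge 2$ and set $n = |V(H)|$. The $(k-1)$-covering hypothesis is very strong: any pair $\{u,v\}$ of distinct vertices extends to a $(k-1)$-subset that, by hypothesis, lies in some edge, so $u$ and $v$ share an edge. In particular $H$ is connected, and each vertex is incident to at least $\binom{n-1}{k-2}/(k-1)$ edges. I then invoke the factor-theoretic framework of Bahmanian and \v{S}ajna \cite{BS2}: associated to $H$ is an auxiliary graph $G_H$ whose vertex set consists of the incidences of $H$, with a clique on the incidences of each edge and a clique on the incidences at each vertex, such that Euler families of $H$ correspond bijectively to certain ``2-factor-like'' subgraphs of $G_H$ (one edge selected from each edge-clique, matchings within vertex-cliques, with appropriate parity conditions).

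The main technical step is to prove existence of such a subgraph. Encoded as a $(g,f)$-factor of a suitable graph derived from $G_H$, its existence will follow from Lov\'asz's $(g,f)$-factor theorem; the resulting numerical criterion on pairs $(S,T)$ of disjoint vertex subsets must then be verified. The high minimum degree of $H$ afforded by the $(k-1)$-covering hypothesis translates to large degrees in $G_H$, which should dominate the right-hand side of Lov\'asz's inequality after a case analysis on $|S|$ and $|T|$. This counting verification is the technical heart of the argument and the first main obstacle.

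To upgrade an Euler family to an Euler tour, I plan to iteratively splice pairs of walks sharing a common anchor, reducing the number of walks by one each time. The second and more delicate obstacle --- and the reason the $(k-1)$-covering hypothesis is used here in its full strength, as opposed to the weaker $\ell$-covering treated elsewhere in the paper --- is to ensure that such a shared anchor always exists. Possible approaches include perturbing the factor in $G_H$ until the corresponding Euler family collapses to a single walk, or showing directly that the richness of common vertices among edges of $H$ forces shared anchors between any two walks. Either way, the splicing step, together with the Lov\'asz verification, is where the bulk of the technical effort lies.
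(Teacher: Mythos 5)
This theorem is not proved in the present paper at all: it is imported from \cite{SW1} as a known result, so there is no in-paper proof to match your argument against. Judged on its own merits, your proposal has a genuine gap --- in fact two. The sufficiency direction is a plan rather than a proof: both steps you correctly identify as the ``technical heart'' are left entirely open. First, you never verify Lov\'asz's condition (\ref{eqn:lovaszcondition}) for the graph derived from your auxiliary construction; in the present paper the analogous verification for the weaker $2$-covering case already consumes Lemmas~\ref{lem:2-connected}--\ref{lem:notturan} and all of Proposition~\ref{lem:relax}, so asserting that ``high minimum degree should dominate the right-hand side after a case analysis'' is not a proof, and the hard cases are exactly those where $S\cup T$ consists of e-vertices, where vertex degrees do not enter at all.

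Second, and more fundamentally, the factor-theoretic route cannot by itself deliver an Euler \emph{tour}: by Theorem~\ref{thm:incidencegraph}, a tour corresponds to a spanning subgraph with degree conditions \emph{and} at most one non-trivial component, and Lov\'asz's theorem gives no control over connectedness. This is precisely why the present paper settles for Euler families (quasi-eulerianity) in Theorem~\ref{thm:lcoveringkhypergraphs} rather than tours. Your proposed fix --- splicing closed trails at shared anchors, or ``perturbing the factor until the family collapses to a single walk'' --- is exactly the missing argument, and you give no mechanism guaranteeing that distinct trails of the family can be made to share an anchor (members of an Euler family are anchor-disjoint by definition, so one must modify the factor, not merely concatenate). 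Until that step is supplied, your argument proves at most that a $(k-1)$-covering $k$-hypergraph with at least two edges is quasi-eulerian, which is strictly weaker than the stated theorem; the actual proof of the tour statement in \cite{SW1} requires different machinery. (Your necessity argument and the degree bound $\deg(v)\ge\binom{n-1}{k-2}/(k-1)$ are fine.)
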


In this paper, we aim to extend Theorem~\ref{thm:coveringinduction} to all $\ell$-covering $k$-hypergraphs. Our main result is as follows.

\medskip

\begin{thm}\label{thm:lcoveringkhypergraphs}
Let $\ell$ and $k$ be integers, $2 \le \ell < k$, and let $H$ be an $\ell$-covering $k$-hypergraph.  Then $H$ admits an Euler family if and only if it has at least two edges.
\end{thm}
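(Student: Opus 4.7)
The plan is to reduce the problem to a factor problem on the bipartite incidence graph $G_H$ of $H$ (with bipartition $V(H) \cup E(H)$, and $v \in V(H)$ adjacent to $e \in E(H)$ iff $v \in e$), and then apply Lov\'asz's $(g,f)$-factor theorem. First, observe that any $\ell$-covering $k$-hypergraph is automatically $\ell'$-covering for each $2 \le \ell' \le \ell$, since any $\ell'$-subset extends to an $\ell$-subset lying in some edge; hence it suffices to establish the theorem when $\ell = 2$. Necessity is immediate: a hypergraph with a single edge admits no closed walk (the convention $v_{i-1} \ne v_i$ on successive vertices precludes the single-step walk $v, e, v$), hence no Euler family.

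For sufficiency, assume $H$ is a 2-covering $k$-hypergraph with $|E(H)| \ge 2$. I would invoke the characterization from Bahmanian and \v{S}ajna \cite{BS2}: $H$ admits an Euler family if and only if $G_H$ contains a spanning subgraph $F$ such that $\deg_F(e) = 2$ for every $e \in E(H)$ and $\deg_F(v)$ is even (possibly $0$) for every $v \in V(H)$. This fits the framework of a parity $(g,f)$-factor on $G_H$: on the edge-side set $g(e) = f(e) = 2$; on the vertex-side set $g(v) = 0$ and $f(v) = \deg_H(v)$, together with the constraint that each $v \in V(H)$ have even $F$-degree. Lov\'asz's parity $(g,f)$-factor theorem then reduces the existence of $F$ to checking a single deficiency inequality over all pairs of disjoint subsets of $V(G_H)$.

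Verifying this deficiency inequality is the principal obstacle. For disjoint $S, T \subseteq V(G_H)$, I would split $S = S_V \cup S_E$ and $T = T_V \cup T_E$ by intersecting with $V(H)$ and $E(H)$, and bound each contribution separately. Two consequences of the 2-covering property are critical. First, every vertex of $H$ has degree at least $(|V(H)|-1)/(k-1)$, and $|E(H)| \ge \binom{|V(H)|}{2}/\binom{k}{2}$, both of which grow with $|V(H)|$. Second, removing a small set of edges from $H$ still leaves most pairs of vertices mutually covered, which controls the number of ``odd components'' of $G_H - (S \cup T)$ appearing in the Lov\'asz bound. The hardest case I anticipate is when $|S_E| + |T_E|$ is close to $|E(H)|$, so that the covering property no longer supplies many remaining edges; here a delicate case analysis, likely combined with a separate direct construction for small edge counts (e.g.\ $|E(H)| \in \{2,3\}$), would be needed to close the inequality.
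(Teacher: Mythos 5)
Your reduction of the general case to $\ell=2$ is correct and in fact simpler than the paper's argument: the paper proves the step from $\ell$ to $\ell+1$ by an induction in which one vertex is deleted from every edge and the resulting Euler family is lifted back, whereas you simply observe that an $\ell$-covering $k$-hypergraph (having at least $\ell$ vertices) is already $2$-covering for the \emph{same} $k$. Both reductions are valid; yours avoids the truncation map entirely. Note, however, that you then need the $2$-covering result for every $k\ge 3$: the paper's Theorem~\ref{thm:2coveringkhypergraphs} covers only $k\ge 4$, the case $k=3$ being exactly the $(k-1)$-covering case settled by Theorem~\ref{thm:coveringinduction}, so $k=3$ must be quoted or handled separately in your scheme.

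The genuine gap is the base case itself. First, your Lov\'asz setup is flawed: with $g(v)=0$ and $f(v)=\deg_H(v)$ the parity hypothesis $g(x)\equiv f(x)\pmod 2$ fails at every vertex of odd degree, and the factor produced by the theorem satisfies $\deg_F(v)\equiv f(v)\pmod 2$, which would then be odd --- not the even degree required by Theorem~\ref{thm:incidencegraph}. This is fixable (take $f(v)$ to be a suitably large even number; the paper instead appends $2(m+n)^2$ loops to each v-vertex, which simultaneously forces even degrees after deleting loops and makes the deficiency inequality automatic whenever $S\cup T$ meets $V$). Second, and more seriously, you do not verify the deficiency inequality at all; you list plausible ingredients (degree and size lower bounds from the covering property) and concede that a ``delicate case analysis'' is still needed. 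That verification is essentially the entire content of the paper: Lemma~\ref{lem:2-connected} (no cut edges), Proposition~\ref{lem:relax} (reducing Lov\'asz's condition to the combinatorial condition $|X|\ge 2\lfloor(\mathrm{c}(G^*-X)+3)/k\rfloor$ for all $X\subseteq E$ with $|X|\ge 2$), and the counting argument of Theorem~\ref{thm:2coveringkhypergraphs}, which splits on whether $G^*-X$ has isolated v-vertices and uses Lemma~\ref{lem:numedges}, Corollary~\ref{cor:smallcases} for small orders, and the convexity bound of Lemma~\ref{lem:notturan}. Without an argument that actually closes the inequality --- in particular in the hard regime $S\cup T\subseteq E$ where $G-(S\cup T)$ may have many components --- the proposal remains a plan rather than a proof.
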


As the concept of an Euler family is a relaxation of the concept of an Euler tour, the conclusion of Theorem~\ref{thm:lcoveringkhypergraphs} is weaker than that of Theorem~\ref{thm:coveringinduction}; however, it holds for a much larger class of hypergraphs.

We prove Theorem~\ref{thm:lcoveringkhypergraphs} by induction on $\ell$. The base case $\ell=2$ is stated as Theorem~\ref{thm:2coveringkhypergraphs}; its proof is essentially a counting argument and requires  most of the work.
The main part of the proof is presented in Section~\ref{sec:main}, while some special cases and technical details are handled in Sections~\ref{sec:lemmas} and \ref{sec:Lovasz}. In particular, in Section~\ref{sec:Lovasz}, using the Lovasz $(g,f)$-factor theorem, we develop a sufficient condition for a $k$-uniform hypergraph without cut edges to admit an Euler family.

\section{Preliminaries}\label{sec:preliminaries}

We use hypergraph terminology established in \cite{BS1, BS2}, which applies to loopless graphs as well.  Any graph theory terms not explained here can be found in \cite{BM}.

A {\em hypergraph} $H$ is a pair $(V,E)$, where $V$ is a non-empty set, and $E$ is a multiset of elements from $2^V$.  The elements of $V=V(H)$ and $E=E(H)$ are called the {\em vertices} and {\em edges} of $H$, respectively. The {\em order} of $H$ is $|V|$, and the {\em size} is $|E|$.  A hypergraph of order 1 is called {\em trivial}, and a hypergraph with no edges is called {\em empty}.

Distinct vertices $u$ and $v$ in a hypergraph $H=(V,E)$ are called {\em adjacent} (or {\em neighbours})  if they lie in the same edge, while a vertex $v$ and an edge $e$ are said to be {\em incident} if $v\in e$.  The {\em degree} of $v$ in $H$, denoted $\deg_H(v)$, is the number of edges of $H$ incident with $v$. An edge $e$ is said to {\em cover} the vertex pair $\{ u, v \}$ if $\{ u, v \} \subseteq e$.  A hypergraph $H$ is called {\em $k$-uniform} if every edge of $H$ has cardinality $k$.

\medskip

\begin{defn}{\rm
Let $\ell$ and $k$ be integers, $2 \le \ell < k$. An {\em $\ell$-covering $k$-hypergraph} is a $k$-uniform hypergraph in which every $\ell$-subset of vertices lie together in at least one edge.
}
\end{defn}

The {\em incidence graph} of a hypergraph $H=(V,E)$ is a bipartite simple graph $G$ with vertex set $V \cup E$ and bipartition $\{V,E\}$ such that
vertices $v\in V$ and $e\in E$ of $G$ are adjacent if and only if $v$ is incident with $e$ in $H$.  The elements of $V$ and $E$ are called {\em v-vertices} and {\em e-vertices} of $G$, respectively.

A hypergraph $H' = (V',E')$ is called a {\em subhypergraph} of the hypergraph $H = (V,E)$ if $V'\subseteq V$ and $E'=\{ e \cap V': e \in E''\}$ for some submultiset $E''$ of $E$. For $e \in E$, the symbol $H \sm e$  denotes the subhypergraph $(V, E-\{ e \})$ of $H$, and for $v \in V$, the symbol $H-v$ denotes the subhypergraph $(V-\{ v \},E')$ where $E' = \{e - \{ v \}: e \in E, e - \{ v \} \ne \emptyset \}$.

A {\em $(v_0,v_k)$-walk} in $H$ is a sequence  $W=v_0e_1v_1e_2 \ldots e_kv_k$ such that $v_0,\ldots , v_k\in V$; $e_1, \ldots , e_k\in E$; and  $v_{i-1},v_i\in e_i$ with $v_{i-1}\neq v_i$ for all $i=1, \ldots, k$.  A walk is said to {\em traverse} each of the vertices and edges in the sequence.  The vertices $v_0,v_1, \ldots, v_k$ are called the {\em anchors} of $W$.  If $e_1, e_2, \ldots, e_k$ are pairwise distinct, then $W$ is called a {\em trail} ({\em strict trail} in \cite{BS1,BS2}); if $v_0=v_k$ and $k \ge 2$, then $W$ is {\em closed}.

A hypergraph $H$ is {\em connected} if every pair of vertices are connected in $H$; that is, if for any pair $u,v\in V(H)$, there exists a $(u,v)$-walk in $H$.  A {\em connected component} of $H$ is a maximal connected subhypergraph of $H$ without empty edges. The number of connected components of $H$ is denoted by $\c(H)$. We call $v \in V(H)$ a {\em cut vertex} of $H$, and  $e\in E(H)$ a {\em cut edge} of $H$, if $\c(H-v)>\c(H)$ and $\c(H\sm  e)>\c(H)$, respectively.

An {\em Euler family} of a hypergraph $H$ is a collection of pairwise anchor-disjoint and edge-disjoint closed trails that jointly traverse every edge of $H$, and an {\em Euler tour}  is a closed trail that traverses every edge of $H$.
A hypergraph that is either empty or admits an Euler tour (family) is called {\em eulerian (quasi-eulerian)}. Note that an Euler tour corresponds to an Euler family of cardinality 1, so every eulerian hypergraph is also quasi-eulerian.

The following theorem allows us to determine whether a hypergraph is eulerian or quasi-eulerian from its incidence graph.

\medskip

\begin{thm}\label{thm:incidencegraph}{\rm \cite[Theorem 2.18]{BS2}} Let $H$ be a hypergraph and $G$ its incidence graph.  Then the following hold.
\begin{description}
\item[(1)] $H$ is quasi-eulerian if and only if $G$ has a spanning subgraph $G'$ such that $\deg_{G'}(e) = 2$ for all $e\in E(H)$, and $\deg_{G'}(v)$ is even for all $v\in V(H)$.
\item[(2)] $H$ is eulerian if and only if $G$ has a spanning subgraph $G'$ with at most one non-trivial connected component
    such that $\deg_{G'}(e) = 2$ for all $e\in E(H)$, and $\deg_{G'}(v)$ is even for all $v\in V(H)$.
\end{description}
\end{thm}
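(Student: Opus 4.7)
The plan is to exploit the natural correspondence between closed trails in $H$ and certain even subgraphs of the incidence graph $G$: a closed trail $v_0 e_1 v_1 e_2 \cdots e_k v_k$ with $v_0 = v_k$ in $H$ lifts to the closed walk $v_0, e_1, v_1, e_2, \ldots, e_k, v_k$ in $G$, which traverses the $2k$ edges $\{v_{i-1}, e_i\}$ and $\{e_i, v_i\}$ for $i = 1, \ldots, k$. I will use this lift in one direction and the standard decomposition of an even graph into an Eulerian circuit per non-trivial component in the other. Parts (1) and (2) will be handled in parallel, with (2) obtained from (1) by imposing, respectively extracting, a connectedness condition.

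\emph{Necessity.} Given an Euler family $\{W_1, \ldots, W_t\}$ of $H$, let $G'$ be the spanning subgraph of $G$ whose edge set is the union, over all $W_j$ and all positions $i$, of the two incidence edges $\{v_{i-1}^{(j)}, e_i^{(j)}\}$ and $\{e_i^{(j)}, v_i^{(j)}\}$ obtained by lifting $W_j$. Since the trails are edge-disjoint in $H$, each e-vertex $e$ of $G$ is incident to exactly two edges of $G'$, namely those corresponding to the unique step in the unique trail that uses $e$; so $\deg_{G'}(e)=2$. A v-vertex $v$ accumulates two edges of $G'$ per occurrence in an anchor sequence (one for entry, one for exit), hence $\deg_{G'}(v)$ is even. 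This proves the ``only if'' part of (1). For (2), an Euler tour lifts to a single closed walk, so all edges of $G'$ lie in one connected component, giving at most one non-trivial component.

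\emph{Sufficiency.} Suppose $G'$ is a spanning subgraph of $G$ with $\deg_{G'}(e)=2$ for every $e \in E(H)$ and $\deg_{G'}(v)$ even for every $v \in V(H)$. Then every vertex of $G'$ has even degree, so each non-trivial connected component of $G'$ admits an Eulerian circuit. Because $G$ is bipartite with bipartition $\{V(H), E(H)\}$, such a circuit alternates v-vertices and e-vertices and can therefore be read off directly as a closed trail in $H$. The condition $\deg_{G'}(e)=2$ guarantees that each edge of $H$ lies in exactly one non-trivial component of $G'$ and is traversed exactly once by that component's Eulerian circuit. Since distinct components share no vertices, the resulting closed trails are pairwise edge-disjoint and anchor-disjoint, and together they traverse every edge of $H$ exactly once; hence they constitute an Euler family, proving (1). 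If moreover $G'$ has at most one non-trivial component, there is a single such closed trail covering all of $E(H)$, which is an Euler tour, proving (2).

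The only subtleties I expect to address explicitly are two minor ones. First, v-vertices of degree $0$ in $G'$ lie in trivial components and contribute nothing to the family, so anchor-disjointness across non-trivial components reduces to the trivial remark that each v-vertex of positive $G'$-degree lies in a unique component. Second, one must check that every closed trail produced has length at least two, as required by the definition of a closed walk; but $G$ is simple and bipartite, so its shortest cycle has length $4$, and every Eulerian circuit of a non-trivial component therefore yields a closed trail with at least two edges of $H$. No further obstacle arises.
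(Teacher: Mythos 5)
This theorem is quoted from \cite[Theorem 2.18]{BS2} and the paper gives no proof of its own, so there is nothing internal to compare against; your argument is the standard one (lifting closed trails to even subgraphs of the incidence graph, and conversely decomposing an even subgraph into one Eulerian circuit per non-trivial component, which alternates v-vertices and e-vertices since $G$ is bipartite), and it is correct, including the two subtleties you flag about trivial components and the length-$\ge 2$ requirement for closed trails.
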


\section{Technical Lemmas}\label{sec:lemmas}

In this section, we take care of some special cases and prove some technical results that will aid in the proof of our base case, Theorem~\ref{thm:2coveringkhypergraphs}.

\medskip

\begin{lem}\label{lem:2-connected} Let $k\geq 4$, and let $H$ be a 2-covering $k$-hypergraph with at least 2 edges.  Then $H$ has no cut edges.
\end{lem}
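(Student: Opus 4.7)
The plan is to argue by contradiction: I assume $e$ is a cut edge of $H$, and show that the 2-covering property, combined with the $k$-uniformity, forces the components of $H\sm e$ to fit inside the single edge $e$ in an impossible way.

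First, because $H$ is 2-covering, any two vertices lie in a common edge and hence are connected by a walk of length one; thus $H$ is connected and $\c(H)=1$. Since $e$ is a cut edge, $\c(H\sm e)\geq 2$, so there exist two distinct components $X,Y$ of $H\sm e$. For any $u\in V(X)$ and $v\in V(Y)$, the 2-covering property supplies an edge $f$ with $\{u,v\}\subseteq f$; if $f\neq e$, then $f$ would be an edge of $H\sm e$ exhibiting a walk from $u$ to $v$ in $H\sm e$, contradicting that they lie in distinct components. Hence $f=e$ and $u,v\in e$. Varying $u$ and $v$ yields $V(X)\cup V(Y)\subseteq e$, and since distinct components are vertex-disjoint this gives $|V(X)|+|V(Y)|\leq|e|=k$.

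To derive a contradiction, I invoke the hypothesis $|E(H)|\geq 2$: then $H\sm e$ contains some edge $e'$, and $e'$ lies in some component $X$ of $H\sm e$, forcing $|V(X)|\geq|e'|=k$. Taking $Y$ to be any other component of $H\sm e$ (which exists since $\c(H\sm e)\geq 2$), the bound from the previous step gives $|V(X)|\leq k-|V(Y)|\leq k-1$, contradicting $|V(X)|\geq k$.

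The argument is short; the only conceptually nontrivial step is the middle one, where the 2-covering property is used to trap every component of $H\sm e$ inside the single edge $e$. After that, the counting contradiction is immediate, and nothing in the argument needs a lower bound on $k$ beyond $k\geq 3$.
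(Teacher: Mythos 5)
Your proof is correct, and it takes a genuinely different (though equally elementary) route from the paper's. The paper argues constructively: assuming $e$ is a cut edge, it picks $u,v\in e$ that are separated in $H\setminus e$, notes that the hypotheses force $|V(H)|\neq k$ so there is a vertex $w\in V(H)-e$, and then uses the $2$-covering property to find edges $e_1\supseteq\{u,w\}$ and $e_2\supseteq\{v,w\}$; since $w\notin e$, neither equals $e$, so $u\,e_1\,w\,e_2\,v$ reconnects $u$ and $v$ in $H\setminus e$. You instead never exhibit a reconnecting walk: you show that for any two distinct components $X,Y$ of $H\setminus e$, every cross pair can only be covered by $e$ itself, so $V(X)\cup V(Y)\subseteq e$ and $|V(X)|+|V(Y)|\le k$, and then the hypothesis $|E(H)|\ge 2$ places a full $k$-edge inside one component, giving $|V(X)|\ge k$ and an immediate counting contradiction. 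What each buys: the paper's argument is direct and exhibits the walk explicitly, but it must first dispose of the case $|V(H)|=k$ to secure a vertex outside $e$; yours handles that case uniformly, makes transparent that the bound $k\ge 4$ is irrelevant (any $k\ge 2$ works, not just $k\ge 3$ as you note), and isolates the real content of the lemma, namely that a cut edge would have to contain two whole components. One tiny point of hygiene, which affects the paper's proof equally: since $E(H)$ is a multiset, ``$f\neq e$'' should be read as ``$f$ is an edge still present in $H\setminus e$''; if the covering edge is a second copy of $e$, you reach the contradiction immediately, and otherwise $u,v\in e$ as you claim, so nothing breaks.
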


\begin{proof}
Suppose $e$ is a cut edge of $H$. Then there exist vertices $u,v \in e$ that are disconnected in $H \sm e$. Since $H$ has at least 2 edges, it must be that $k \ne |V(H)|$ and $e \ne V(H)$. Hence there exists $w \in V(H)-e$. Let $e_1, e_2$ be edges of $H$ containing $u$ and $w$, and $v$ and $w$, respectively. As $e \not\in \{ e_1,e_2 \}$, we can see that $u e_1 w e_2 v$ is a $(u,v)$-walk in $H \sm e$, a contradiction.
\end{proof}

\smallskip

\begin{lem}\label{lem:numedges} Let $k\geq 4$, and let $H$ be a 2-covering $k$-hypergraph of order $n>\frac{3k}{2}$ and size $m \ge 2$.
Then $m \ge 2\lfloor\frac{n+3}{k}\rfloor$.
\end{lem}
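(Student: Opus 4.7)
The plan is to derive a lower bound on $m$ from the 2-covering property via a minimum-degree argument, and then verify the desired inequality by a short case analysis on $q := \lfloor (n+3)/k \rfloor$.

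The starting observation is that for any vertex $v$, each of the $n-1$ pairs $\{v,u\}$ with $u \ne v$ must be covered by some edge of $H$ incident with $v$, and each such edge covers exactly $k-1$ of them. Hence $\deg_H(v)(k-1) \ge n-1$ for every $v$, giving
\[
\delta(H) \;\ge\; \left\lceil \frac{n-1}{k-1} \right\rceil.
\]
Combined with the handshake identity $km = \sum_{v\in V(H)} \deg_H(v) \ge n\,\delta(H)$, this yields
\[
m \;\ge\; \frac{n}{k}\left\lceil \frac{n-1}{k-1}\right\rceil.
\]

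It remains to verify that this bound is at least $2q$. Note that $n \ge qk-3$ by definition of $q$. If $q \le 1$, then $2q \le 2 \le m$ by hypothesis. If $q = 2$, then since $n > 3k/2 > k$ for $k\ge 4$, we have $(n-1)/(k-1) > 1$, so $\delta(H) \ge 2$, hence $m \ge 2n/k > 3$, which forces $m \ge 4 = 2q$. If $q \ge 3$, I would argue that $\delta(H) \ge q$ by observing
\[
\frac{n-1}{k-1} \;\ge\; \frac{qk-4}{k-1} \;=\; q + \frac{q-4}{k-1};
\]
this is at least $q$ when $q \ge 4$, and equals $3 - 1/(k-1) > 2$ when $q = 3$ and $k \ge 4$, so the ceiling is at least $q$ in either sub-case. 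The handshake bound then gives $m \ge qn/k \ge q(qk-3)/k = q^2 - 3q/k$, and the inequality $q^2 - 3q/k \ge 2q$ is equivalent to $k(q-2) \ge 3$, which holds for all $q \ge 3$ and $k \ge 4$.

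The whole argument is elementary double counting, and I do not foresee a serious obstacle. The only subtle point is the $q = 2$ case, where the strict inequality $n > 3k/2$ (rather than just $n \ge 2k-3$) is precisely what is needed to push the integer $m$ past $3$; everything else is a routine verification of floor/ceiling arithmetic.
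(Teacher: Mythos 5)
Your proof is correct and follows essentially the same route as the paper's: your case split on $q=\lfloor\frac{n+3}{k}\rfloor$ ($q\le 1$, $q=2$, $q\ge 3$) corresponds exactly to the paper's split on $n$ ($n\le 2k-4$, $2k-3\le n\le 3k-4$, $n\ge 3k-3$), and your $q=2$ case is the paper's minimum-degree-$2$ handshake argument, which is precisely where $n>\frac{3k}{2}$ is used. Your $q\ge 3$ case repackages the paper's global pair-counting bound $m\ge \frac{n(n-1)}{k(k-1)}$ as the per-vertex bound $\delta(H)\ge\big\lceil\frac{n-1}{k-1}\big\rceil$ combined with the handshake identity (the ceiling giving a mild refinement), so the underlying double-counting idea is the same.
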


\begin{proof}
If  $n\leq 2k-4$, then $2\lfloor\frac{n+3}{k}\rfloor\leq 2 \le m$. Hence assume $n \ge 2k-3$.

Suppose first that  $n\geq 3k-3$.
Since there are $\binom{n}{2}$ pairs of vertices to cover, and each edge covers $\binom{k}{2}$ pairs, we know that $m\geq \frac{n(n-1)}{k(k-1)}$. As $k \ge 4$, we have
\begin{align*}
m\geq \frac{n(n-1)}{k(k-1)} &\geq \frac{(3k-3)(n-1)}{k(k-1)}= \frac{3(n-1)}{k}= \frac{2n + n - 3}{k}\\
&\geq \frac{2n + 3k - 6}{k}\geq \frac{2n + 6}{k}\geq 2\Big\lfloor\frac{n+3}{k}\Big\rfloor.
\end{align*}

Finally, assume $2k-3\leq n\leq 3k-4$.  As $2\lfloor\frac{n+3}{k}\rfloor\leq 4$, it suffices to show that $m\geq 4$.  Suppose $m\leq 3$.  Since $H$ is a 2-covering $k$-hypergraph with $n > k$ and $m \ge 2$, every vertex has degree at least 2. Thus
$$2n \leq \sum_{v\in V(H)} \deg(v) = km \le 3k$$
and $n \le \frac{3k}{2}$, contradicting the assumption that $n>\frac{3k}{2}$.

Therefore, in all cases we have $m \ge 2\lfloor\frac{n+3}{k}\rfloor.$
\end{proof}

\smallskip

\begin{lem}\label{lem:2-intersection} Let $H$ be a hypergraph with $|E(H)|\geq 2$ satifying the following.
\begin{itemize}
\item For all $e,f\in E(H)$, we have $|e\cap f|\geq 2$; and
\item there exist distinct $e, f\in E(H)$ such that $|e\cap f|\geq 3$.
\end{itemize}
Then $H$ is eulerian.
\end{lem}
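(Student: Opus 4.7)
The plan is to construct an Euler tour of $H$ explicitly, as a closed walk that visits each edge exactly once in some cyclic order. The first hypothesis ensures that every two edges share enough vertices to serve as anchors between them, and the second hypothesis will supply the extra slack needed to close the cycle.

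First, I would list the edges of $H$ as $e_1, e_2, \ldots, e_m$ (with multiplicity, if any) in an order chosen so that the pair guaranteed by the second hypothesis appears as the ``wraparound'' pair, i.e., $|e_m \cap e_1| \geq 3$. The goal is to build a closed walk $W = v_0\, e_1\, v_1\, e_2\, \ldots\, e_m\, v_0$ with $v_i \in e_i \cap e_{i+1}$ (indices modulo $m$) and $v_{i-1} \ne v_i$ for each $i$. Since the edges $e_1, \ldots, e_m$ are then pairwise distinct by construction, $W$ will be a closed trail traversing every edge of $H$, i.e., an Euler tour.

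The case $m = 2$ is handled directly: pick any two distinct $v_0, v_1 \in e_1 \cap e_2$, which exist since $|e_1 \cap e_2| \geq 3$, and $v_0\, e_1\, v_1\, e_2\, v_0$ is an Euler tour. For $m \geq 3$, the anchors can be chosen greedily. Pick $v_1 \in e_1 \cap e_2$ arbitrarily; then for $i = 2, 3, \ldots, m-1$ in turn, pick any $v_i \in (e_i \cap e_{i+1}) \setminus \{v_{i-1}\}$, which exists since $|e_i \cap e_{i+1}| \geq 2$; finally, pick $v_0 \in (e_m \cap e_1) \setminus \{v_1, v_{m-1}\}$.

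The single substantive step is the closure, where $v_0$ must simultaneously differ from both $v_1$ and $v_{m-1}$; this is precisely what demands $|e_m \cap e_1| \geq 3$, and is the only place the second hypothesis enters. Every other anchor choice is a routine application of the first hypothesis, and I do not foresee any further obstacle.
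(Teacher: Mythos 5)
Your proposal is correct and follows essentially the same argument as the paper: order the edges so the pair with intersection at least $3$ is the wraparound pair, choose anchors greedily in consecutive intersections avoiding the previous anchor, and use the triple intersection to close the trail. No issues.
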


\begin{proof}
Let $E(H)=\{e_1, \ldots, e_m\}$ and assume $e_1$ and $e_m$ are distinct edges such that $|e_1\cap e_m|\geq 3$.
Take any $v_1\in e_1\cap e_2$. For $i=2, \ldots, m-1$, let $v_i$ be a vertex in $(e_{i}\cap e_{i+1})- \{v_{i-1}\}$, and let $v_0 \in (e_1 \cap e_m) - \{v_1, v_{m-1}\}$. It is easy to verify that  $v_0 e_1 v_1 \ldots v_{m-1} e_m v_0$ is an Euler tour of $H$.
\end{proof}

\smallskip

\begin{cor}\label{cor:smallcases}Let $H$ be a 2-covering $k$-hypergraph of order $n$.  If $n\leq 2k-3$ or $(k,n)=(4,6)$, then $H$ is eulerian.
\end{cor}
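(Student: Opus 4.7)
The plan is to reduce both cases of the corollary to Lemma~\ref{lem:2-intersection}. Throughout one may assume $|E(H)|\geq 2$, which is the relevant setting for the application to the base case $\ell=2$ of Theorem~\ref{thm:lcoveringkhypergraphs}.

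For the first case, $n\leq 2k-3$, I would start from the inclusion-exclusion bound
\[|e\cap f|= |e|+|f|-|e\cup f|\geq 2k - n \geq 3,\]
which holds for any two distinct edges $e,f\in E(H)$. Hence every pair of distinct edges intersects in at least three vertices; both hypotheses of Lemma~\ref{lem:2-intersection} are automatically met, and the lemma delivers an Euler tour.

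For the second case, $(k,n)=(4,6)$, the same bound only gives $|e\cap f|\geq 2$. If some two edges intersect in at least $3$ vertices, Lemma~\ref{lem:2-intersection} applies directly, so the only remaining case is that every pair of distinct edges intersects in exactly $2$ vertices. Here I would pass to complements: writing $\overline{e}=V(H)\setminus e$, each $\overline{e}$ is a $2$-subset of $V(H)$, and the condition $|e\cap f|=2$ is equivalent to $\overline{e}\cap\overline{f}=\emptyset$. A $6$-set admits at most three pairwise disjoint $2$-subsets, so $|E(H)|\leq 3$. On the other hand, a short count (two size-$4$ edges cover at most $2\binom{4}{2}-1=11$ of the $\binom{6}{2}=15$ vertex pairs) shows any 2-covering $(4,6)$-hypergraph has at least $3$ edges. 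Therefore $|E(H)|=3$, and the complements $\overline{e_1},\overline{e_2},\overline{e_3}$ partition $V(H)$; in particular, $e_i\cap e_j = \overline{e_k}$ whenever $\{i,j,k\}=\{1,2,3\}$, so the three pairwise intersections are themselves disjoint. Picking one anchor from each then yields an explicit Euler tour $v_0 e_1 v_1 e_2 v_2 e_3 v_0$.

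The only real obstacle is this exactly-$2$ subcase at $(k,n)=(4,6)$; the 2-covering hypothesis pins the hypergraph down to a single, explicitly eulerian configuration, so the extra analysis remains entirely concrete.
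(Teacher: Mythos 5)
Your proof is correct and follows essentially the same route as the paper: both cases are reduced to Lemma~\ref{lem:2-intersection}, and the remaining $(k,n)=(4,6)$ subcase with all pairwise intersections equal to $2$ is settled by pinning down the unique three-edge configuration and writing an explicit Euler tour. Your complement/counting argument simply fills in the detail that the paper dismisses with ``it is not difficult to see,'' and your standing assumption $|E(H)|\ge 2$ matches what the paper implicitly uses when invoking Lemma~\ref{lem:2-intersection}.
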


\begin{proof}
If $n\le 2k-3$, then every pair of edges $e,f\in E(H)$ satisfies $|e\cap f|\geq 3$, so $H$ is eulerian by Lemma~\ref{lem:2-intersection}.

Assume now that $(k,n)=(4,6)$. For all $e,f \in E(H)$, we have $|e\cap f|\geq 2$.  If there exist distinct edges $e, f \in E(H)$ such that $|e\cap f|\geq 3$, then  $H$ is eulerian by Lemma~\ref{lem:2-intersection}.  Hence assume $|e \cap f|=2$ for all $e,f \in E(H)$, and let $V(H)=\{ v_1,\ldots, v_6\}$. It is not difficult to see that we must have $E(H)=\{ e_1, e_2, e_3 \}$ where, without loss of generality, the edges are $e_1=v_1v_2v_3v_4$, $e_2=v_1v_2v_5v_6$, and $e_3=v_3v_4v_5v_6$. It follows that $W=v_3e_1v_2e_2v_5e_3v_3$ is an Euler tour of $H$.
\end{proof}

\smallskip

\begin{lem}\label{lem:notturan} Let $n,k,q \in \ZZ^+$ be such that $n \geq qk$. Let
$$S=\big\{ (x_1,\ldots, x_q) \in (\ZZ^+)^q: x_1 + \cdots + x_q = n, x_i \ge k \mbox{ for all } i \big\},$$
and define $f: S \to \ZZ^+$  by $f(x_1, \ldots, x_q) = \binom{x_1}{2} + \cdots + \binom{x_q}{2}$.
Then $f$ attains its maximum on $S$ at the point $\big(k,\ldots,k,n-k(q-1) \big)$.
\end{lem}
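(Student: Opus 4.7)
The plan is to exploit the strict convexity of $\binom{x}{2}$ on the integers, together with a standard ``mass-shifting'' argument. Since $S$ is finite, the maximum of $f$ on $S$ is attained; I would take any maximizer $(x_1,\ldots,x_q)$ and show that at most one coordinate can exceed $k$, forcing the maximizer (up to reordering) to be $(k,\ldots,k,n-k(q-1))$.

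The key local computation is that if $a \le b$ and $a \ge 2$, then replacing the pair $(a,b)$ by $(a-1,b+1)$ strictly increases the corresponding contribution to $f$. Indeed, a direct calculation gives
\begin{equation*}
\Big[\tbinom{a-1}{2} + \tbinom{b+1}{2}\Big] - \Big[\tbinom{a}{2} + \tbinom{b}{2}\Big] = b - (a-1) = b - a + 1 > 0.
\end{equation*}
This is the only nontrivial computation in the proof; everything else is bookkeeping.

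Now suppose, toward a contradiction, that a maximizer $(x_1,\ldots,x_q)$ has two coordinates $x_i$ and $x_j$ both strictly greater than $k$; relabel so that $k < x_i \le x_j$. Define $(x_1',\ldots,x_q')$ by $x_i' = x_i - 1$, $x_j' = x_j + 1$, and $x_r' = x_r$ for $r \notin \{i,j\}$. Since $x_i > k$, we still have $x_i' \ge k$ and $x_j' \ge k$, so the new point lies in $S$; the sum is unchanged; and by the displayed inequality above (applied with $a = x_i$, $b = x_j$), $f$ has strictly increased. This contradicts the maximality, so at most one coordinate of the maximizer exceeds $k$.

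Since the $q$ coordinates sum to $n$ and each is at least $k$, the constraint that at most one exceeds $k$ forces $q-1$ of them to equal $k$ exactly, and the remaining coordinate to equal $n - k(q-1)$ (which is $\ge k$ because $n \ge qk$). Up to permuting the coordinates, the unique maximizer is therefore $(k,\ldots,k,n-k(q-1))$, as claimed. I don't anticipate any real obstacle here; the only point requiring a moment of care is verifying that the shifted point remains feasible, which is exactly why the hypothesis ``both coordinates strictly greater than $k$'' is needed.
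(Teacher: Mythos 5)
Your proof is correct and follows essentially the same route as the paper: both arguments take a maximizer and perform the same unit mass-shift, using the identity $\binom{a-1}{2}+\binom{b+1}{2}-\binom{a}{2}-\binom{b}{2}=b-a+1>0$ to derive a contradiction, concluding that all but one coordinate must equal $k$. The only cosmetic difference is that the paper sorts the coordinates and shifts onto the largest one, while you phrase it as ruling out two coordinates strictly exceeding $k$; the substance is identical.
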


\begin{proof}
Since the domain $S$ is finite, function $f$ indeed attains a maximum on $S$.

Let ${\bf x} =(x_1, \ldots, x_q) \in S$ be such that $f({\bf x})$ is maximum. By symmetry of $f$, we may assume that  $x_1 \leq  x_2 \leq \ldots \leq x_q$. As $x_1 \ge k$ and $x_q=n-(x_1+\ldots +x_{q-1})$, we observe that $x_q \le n-k(q-1)$.

Suppose that $x_q < n-k(q-1)$. Then there exists $i \in \{ 1,\ldots,q-1 \}$ such that $x_i>k$.  Let $i$ be the smallest index with this property, and let
$${\bf y}=(x_1, \ldots, x_{i-1}, x_i - 1, x_{i+1}, \ldots, x_{q-1}, x_q+1).$$
Then ${\bf y} \in S$ and
\begin{align*}
f({\bf y}) =& \sum_{\substack{j=1 \\ j \neq i}}^{q-1} \binom{x_j}{2} + \binom{x_i - 1}{2} + \binom{x_q + 1}{2} \\
=& \sum_{\substack{j=1 \\ j\neq i}}^{q-1} \binom{x_j}{2} + \frac{x_i(x_i - 1)}{2} - \frac{2(x_i-1)}{2} + \frac{x_q(x_q - 1)}{2} + \frac{2x_q}{2}\\
=& \sum_{j=1}^q \binom{x_j}{2} + (x_q-x_i+1)
> f({\bf x}),
\end{align*}
contradicting the choice of ${\bf x}$.

Hence $x_q = n-k(q-1)$, and consequently $x_1= \ldots = x_{q-1} = k$. Thus $f$ attains its maximum on $S$ at the point ${\bf x}=\big(k,\ldots,k,n-k(q-1) \big)$ as claimed.
\end{proof}

\section{A sufficient condition}\label{sec:Lovasz}

In this section, we state and prove Proposition~\ref{lem:relax}, which gives a sufficient condition for a $k$-uniform hypergraph to admit an Euler family. This sufficient condition will be our main tool in the proof of Theorem~\ref{thm:2coveringkhypergraphs}. It is  based on the $(g,f)$-factor theorem by Lov\'{a}sz \cite{L2}, stated below as Theorem~\ref{thm:lovasz}.

For a graph $G$ and functions $f, g : V (G) \to \NN$, a {\em $(g,f)$-factor} of $G$ is a spanning subgraph $F$ of $G$ such that $g(x) \le \deg_F(x) \le f(x)$ for all $x \in V (G)$. An {\em $f$-factor} is simply an $(f,f)$-factor. For any sets $U,W \subseteq V(G)$, let $\eps_{G}(U,W)$ denote the number of edges of $G$ with one endpoint in $U$ and the other in $W$.

\medskip

\begin{thm}\cite{L2} \label{thm:lovasz}
Let $G=(V,E)$ be a graph and $f,g: V \rightarrow \NN$ be functions such that $g(x)\leq f(x)$ and $g(x)\equiv f(x)$ (mod 2) for all $x\in V$.  Then $G$ has a $(g,f)$-factor $F$ such that $\deg_F(x)\equiv f(x)$ (mod 2) for all $x\in V$ if and only if, for all disjoint $S,T\subseteq V$, we have
\begin{equation}\label{eqn:lovaszcondition}
\sum_{x\in S}f(x) + \sum_{x\in T}(\deg_G(x) - g(x)) - \eps_G(S,T) - q(S,T)\geq 0,
\end{equation}
where  $q(S,T)$ is the number of connected components $C$ of $G-(S\cup T)$ such that $$\sum_{x\in V(C)}f(x) + \eps_G(V(C),T)\equiv 1\text{ (mod 2).}$$
\end{thm}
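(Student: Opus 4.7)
The plan is to prove both directions of the equivalence following Lov\'{a}sz's reduction-based strategy: reduce $(g,f)$-factor existence to $f$-factor existence, and then to Tutte's classical $1$-factor theorem. Necessity is a direct counting argument: given a qualifying $F$, for any disjoint $S, T \subseteq V$ I would count edges of $F$ incident with $S \cup T$ in two ways. The edges leaving $S$ contribute at most $\sum_{x \in S} f(x)$; the edges from $T$ contribute at least $\sum_{x \in T} g(x)$; subtracting the internal count $\eps_G(S,T)$ leaves the edges running from $V - (S \cup T)$ to $T$. For each component $C$ of $G - (S \cup T)$ such that $\sum_{x \in V(C)} f(x) + \eps_G(V(C),T)$ is odd, parity forces at least one edge of $F$ to cross from $V(C)$ into $T$. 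Rearranging the resulting inequality yields \eqref{eqn:lovaszcondition}.

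For sufficiency I would perform two successive gadget reductions. First, absorb the lower bound $g$: attach an auxiliary structure to each vertex $v$ (the parity condition $g \equiv f \pmod{2}$ is what guarantees the gadget sizes are integral) so that $(g,f)$-factors of $G$ with the prescribed degree parities correspond bijectively to $f'$-factors of an enlarged graph $G'$. Second, apply Tutte's classical construction: replace each $v \in V(G')$ by a bipartite ``Tutte gadget'' with $\deg_{G'}(v)$ outer vertices (inheriting the edges at $v$) and $\deg_{G'}(v) - f'(v)$ inner vertices joined to all outer vertices. Perfect matchings of the resulting graph $T(G',f')$ are then in bijection with $f'$-factors of $G'$, and one invokes Tutte's $1$-factor theorem, which characterizes the existence of a perfect matching via the bound $o\bigl(T(G',f') - U\bigr) \le |U|$ for all $U \subseteq V(T(G',f'))$.

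The main obstacle will be translating a hypothetical Tutte-violating set $U$ back into disjoint sets $S, T \subseteq V(G)$ so that $o\bigl(T(G',f') - U\bigr) > |U|$ becomes precisely a violation of \eqref{eqn:lovaszcondition}. I would classify each $v \in V(G)$ according to how $U$ intersects the outer and inner parts of $v$'s Tutte gadget --- placing $v$ into $S$, into $T$, or into neither --- and then optimize over $U$ to obtain the tightest barrier. The delicate bookkeeping step is checking that the parity $\sum_{x \in V(C)} f(x) + \eps_G(V(C),T)$ of a component $C$ of $G - (S \cup T)$ matches exactly the parity condition that makes the corresponding component of $T(G',f') - U$ odd. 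I expect this parity-tracking case analysis, rather than either reduction itself, to constitute the bulk of the technical work.
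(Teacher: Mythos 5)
The paper does not prove this statement at all: Theorem~\ref{thm:lovasz} is quoted verbatim from Lov\'{a}sz's paper \cite{L2} and is used as a black box, so there is no internal proof to compare yours against. Judged on its own, your plan follows the classical route (necessity by edge-counting, sufficiency by reducing $(g,f)$-factors to $f$-factors and then to Tutte's $1$-factor theorem via gadgets), which is a legitimate strategy and differs from Lov\'{a}sz's original argument, which works directly with a structural/alternating-path analysis rather than a reduction to matchings.

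However, as written this is a proposal, not a proof, and the gap sits exactly where you acknowledge it does. Two essential pieces are missing. First, the gadget that ``absorbs the lower bound $g$'' is never specified: for the parity version of the theorem you need a construction that allows $\deg_F(x)$ to range over $\{g(x), g(x)+2, \ldots, f(x)\}$ only, and turning this into an exact $f'$-factor problem (e.g.\ by attaching $\tfrac{f(x)-g(x)}{2}$ auxiliary degree-absorbing vertices or parallel edges to each $x$) must be written down and shown to give a bijection between qualifying factors of $G$ and $f'$-factors of $G'$; without it the reduction is only asserted. Second, and more seriously, the translation of a Tutte barrier $U$ in $T(G',f')$ back into disjoint sets $S,T \subseteq V(G)$, together with the verification that odd components of $T(G',f')-U$ correspond precisely to components $C$ of $G-(S\cup T)$ with $\sum_{x\in V(C)}f(x)+\eps_G(V(C),T)$ odd, is the entire substance of the sufficiency direction; deferring it means the hard half of the theorem is unproved. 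The necessity sketch is essentially sound and routine to complete, but to claim the theorem you would either have to carry out the sufficiency reduction in full detail or do what the paper does and cite \cite{L2}.
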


\medskip

\begin{prop}\label{lem:relax}
Let $k\geq 3$, and let $H=(V,E)$ be a $k$-uniform hypergraph of order $n$ and size $m$. Let $G$ be the incidence graph of $H$, and $G^*$ the graph obtained from $G$ by appending $2(m+n)^2$ loops to every v-vertex.

Assume that $H$ has no cut edges and that for all $X \subseteq E$ with $|X|\geq 2$, we have that $|X|\geq 2\lfloor\frac{\c(G^*-X)+3}{k}\rfloor$.  Then $H$ is quasi-eulerian.
\end{prop}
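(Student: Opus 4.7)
The plan is to deduce quasi-eulerianness from a $(g,f)$-factor of $G^*$ produced by Lov\'{a}sz's theorem (Theorem~\ref{thm:lovasz}), which in turn yields a suitable spanning subgraph of $G$ via Theorem~\ref{thm:incidencegraph}(1). I would set $f(e) = g(e) = 2$ at every e-vertex and $f(v) = 2L$, $g(v) = 0$ at every v-vertex, where $L = 2(m+n)^2$. Both $f,g$ are even-valued, so $g \equiv f \pmod 2$. A $(g,f)$-factor $F$ of $G^*$ with $\deg_F \equiv f \pmod 2$ then has $\deg_F(e) = 2$ for every e-vertex and $\deg_F(v)$ even for every v-vertex; since each loop at $v$ contributes $2$ to $\deg_F(v)$, discarding the loops of $F$ produces a spanning subgraph of $G$ that fulfills the hypothesis of Theorem~\ref{thm:incidencegraph}(1).

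I would then verify Lov\'{a}sz's inequality (\ref{eqn:lovaszcondition}) for every disjoint $S, T \subseteq V(G^*)$. When $S \cup T$ meets $V$, the positive side picks up at least $2L = 4(m+n)^2$, while the negative side is at most $|E(G)| + |V(G^*)| = km + (m+n) \le 2(m+n)^2$ (using $km \le nm \le (m+n)^2$), so the inequality is comfortable. The real work is the case $S,T \subseteq E$. Setting $X = S \cup T$, observing $\eps_{G^*}(S,T) = 0$, and noting that $\sum_{x \in V(C)} f(x)$ is always even, the inequality reduces to $2|S| + (k-2)|T| \ge q(S,T)$, where $q(S,T)$ counts components $C$ of $G^*-X$ with $\eps_G(V(C), T)$ odd. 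The subcases $|X| \le 1$ are immediate from the no-cut-edge hypothesis, which keeps $G^* - X$ connected.

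The main obstacle is the case $|X| \ge 2$. Here I would set $a_C = \eps_G(V(C), T)$ and $b_C = \eps_G(V(C), S)$ and use ``no cut edges'' to show $a_C + b_C \ge 2$ for every component $C$ meeting $X$: a singleton $\{v\}$ has $\deg_H(v) \ge 2$ with all edges in $X$, while a larger $C$ must be joined to $V(H) \setminus V(C)$ by at least two $X$-edges, else that single bridging edge would be cut. Partitioning the components with $a_C$ odd into the ``light'' ones (with $a_C = 1$, which then force $b_C \ge 1$) and the ``heavy'' ones (with $a_C \ge 3$), with counts $q_1$ and $q_2$ so that $q = q_1+q_2$, and summing $\sum_C a_C = k|T|$ and $\sum_C b_C = k|S|$ yields the simultaneous linear inequalities $q + 2q_2 \le k|T|$ and $q - q_2 \le k|S|$. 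From these one obtains $q \le k|T|$ and $q \le k(|T|+2|S|)/3$; a case split on whether $|T| \le |S|$ or $|T| \ge |S|$ then converts one or the other into the required $q \le 2|S| + (k-2)|T|$. The hypothesis $|X| \ge 2\lfloor(\c(G^*-X)+3)/k\rfloor$ enters as the supplementary numerical control on $\c(G^*-X)$ needed to carry through the accounting in borderline configurations; the delicate step is coordinating the parity count, the structural lower bound $a_C + b_C \ge 2$, and the hypothesis to force the final inequality on $q$.
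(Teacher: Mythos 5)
Your proposal stays inside the paper's framework---apply Lov\'asz's theorem to $G^*$ with $f(e)=g(e)=2$ and a huge even value at the v-vertices, dismiss the cases where $S\cup T$ meets $V$ by sheer size, and reduce everything to the inequality $2|S|+(k-2)|T|\ge q(S,T)$ for disjoint $S,T\subseteq E$---but your finishing count is genuinely different. The paper bounds $q(S,T)\le\min\{\c(G^*-X),k|T|\}$ and then needs the numerical hypothesis $|X|\ge 2\lfloor\frac{\c(G^*-X)+3}{k}\rfloor$, with a case split on $|T|$ versus $t=\lfloor\frac{\c(G^*-X)+3}{k}\rfloor$, to absorb the component count; you instead exploit ``no cut edges'' structurally, via the claim $a_C+b_C\ge 2$ for every component incident with $X$, and the $q_1/q_2$ parity split. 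Two remarks. First, state the structural claim in terms of incidences (edges of $G$), not ``$X$-edges'': a single edge of $X$ meeting $C$ in two vertices already gives $a_C+b_C\ge 2$, and what must be excluded is a lone incidence $(v,e)$---which is exactly what the no-cut-edge hypothesis rules out, since then no edge of $H\sm e$ would join $V(C)\cap V$ to its complement while $e-\{v\}$ lies outside it, so the component of $H$ containing $e$ would split; the singleton case likewise needs the observation that a degree-one vertex forces its unique edge to be a cut edge. Second, and worth noticing: your two inequalities $q_1+3q_2\le k|T|$ and $q_1\le k|S|$ already yield $q\le k|T|$ when $|T|\le|S|$ and $q\le\frac{2k|S|+k|T|}{3}\le 2|S|+(k-2)|T|$ when $|S|\le|T|$ (using $k\ge 3$), so the required inequality closes without ever invoking the hypothesis on $|X|$ versus $\c(G^*-X)$; your closing sentence hedging about ``borderline configurations'' is not needed. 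If written out carefully, your argument therefore proves a stronger statement---for $k$-uniform hypergraphs, absence of cut edges alone suffices---whereas the paper's proof avoids this structural use of cut edges inside the main case and instead relies on the numerical condition, which it then verifies for $2$-covering hypergraphs by the counting in Section~\ref{sec:main}; you should double-check the structural claim and the identities $\sum_C a_C=k|T|$, $\sum_C b_C=k|S|$ with this in mind, since they carry the entire weight of that strengthening.
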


\begin{proof}
Let $r=2(m+n)^2$, and define $f:V(G^*)\rightarrow \ZZ$ by
$$ f(x) = \left\{
      	\begin{array}{l l}
      		r & \mbox{ if } x \in V, \\
            2 &  \mbox{ if } x \in E.
			\end{array}
		\right.
$$
We shall use Theorem~\ref{thm:lovasz} to show that $G^*$ has an $(f,f)$-factor, so let $S,T \subseteq V(G^*)$ be disjoint sets, and denote
$$\gamma(S,T)= \sum_{x\in S} f(x) + \sum_{x\in T}(\deg_{G^*}(x)-f(x)) - \eps_{G^*}(S,T)-q(S,T),$$
where $q(S,T)$ is the number of connected components $C$ of $G^*-(S\cup T)$ such that $\eps_{G^*}(V(C), T)$ is odd.
Observe that Condition~(\ref{eqn:lovaszcondition}) for $G^*$ with $g=f$ is equivalent to $\gamma(S,T) \ge 0$.

Since $G$ is a subgraph of $K_{n,m}$, we have $\eps_{G^*}(S,T)\le mn$ and $q(S,T)\le m+n$, and therefore $\eps_{G^*}(S,T) + q(S,T) \le (m+n)^2=\frac{r}{2}$. In addition, we have $\deg_{G^*}(x) - f(x) \geq r$ for all $x\in V$, and $\deg_{G^*}(x) - f(x) \geq k-2$ for all $x\in E$.

{\em Case 1: $(S \cup T) \cap V \ne \emptyset$.}
If $S \cap V \ne \emptyset$, then
$\displaystyle \sum_{x\in S} f(x) \ge r$, and
if $T \cap V \ne \emptyset$, then
$\displaystyle \sum_{x\in T} (\deg_{G^*}(x)-f(x)) \ge r$.
Thus, in both cases
\begin{align*}
\gamma(S,T) &= \Big( \sum_{x\in S} f(x) + \sum_{x\in T} (\deg_{G^*}(x)-f(x))  \Big) - \Big( \eps_{G^*}(S,T) + q(S,T) \Big)\\
& \geq r -\frac{r}{2}  \ge 0.
\end{align*}

{\em Case 2: $(S \cup T) \cap V =\emptyset$.}  Then $\eps_{G^*}(S,T) = 0$ since $S \cup T \subseteq E$.

First, suppose $T=\emptyset$. Then $\eps_{G^*}(V(C),T) = 0$ for all connected components $C$ of $G^*-(S\cup T)$, so $q(S, T) = 0$.  Hence
$\displaystyle \gamma(S,T) = \sum_{x\in S} f(x) \geq 0$.

Next, suppose $S=\emptyset$ and $|T| = 1$.  Then $S \cup T=\{ e \}$ for some $e \in E$. By assumption, edge $e$ is not a cut edge of $H$ and hence by \cite[Theorem 3.23]{BS1}, e-vertex $e$ is not a cut vertex of $G^*$, and $G^*-(S \cup T)$ is connected. It follows that $q(S,T) \le 1$ and
$$
\gamma(S,T) = (\deg_{G^*}(e)-f(e))  - q(S,T) \ge (k-2)  - 1  \ge 0.
$$

We may now assume that $T \ne \emptyset$ and $|S \cup T|\ge 2$.
Since each connected component $C$ of $G^*-(S \cup T)$ with $\eps_{G^*}(V(C),T)$ odd corresponds to at least one edge incident with a vertex in $T$, the number of such components is at most $k|T|$. Hence
$q(S,T)\le \min \{ \c(G^*-(S \cup T)),k|T| \}$, and
\begin{align}
\gamma(S,T) &=2|S| + (k-2)|T| - q(S,T) \nonumber\\
&\geq 2|S\cup T| + (k-4)|T| - \min \{ \c(G^*-(S \cup T)),k|T| \}. \label{eq:gamma}
\end{align}

Define $t = \lfloor\frac{\c(G^*-(S\cup T)) + 3}{k}\rfloor$, so that
$$kt-3\leq \c(G^*-(S\cup T))\leq kt+k-4.$$
If $|T| \ge t+1$, then $\min \{ \c(G^*-(S \cup T)),k|T| \} = \c(G^*-(S \cup T)) \le kt+k-4$, so Inequality~(\ref{eq:gamma}) yields
$$
\gamma(S,T) \ge 2|S\cup T| + (k-4)(t + 1) - (kt+k-4) = 2|S\cup T| - 4t.
$$
The same bound is obtained if $|T| \le t$: in this case, we have
$\min \{ \c(G^*-(S \cup T)),k|T| \} \le k|T|$, so that (\ref{eq:gamma}) yields
$$
\gamma(S,T) \ge 2|S\cup T| + (k-4)|T| - k|T| = 2|S\cup T| - 4|T| \ge 2|S\cup T| - 4t.
$$
In both cases, as $S\cup T \subseteq E$ and $|S\cup T| \ge 2$, the assumption of the proposition implies $|S\cup T| \ge 2\lfloor\frac{\c(G^*-(S\cup T)) + 3}{k}\rfloor=2t$, so  that $\gamma(S,T)\geq 0$.

Therefore, $\gamma(S,T)\geq 0$ for all disjoint $S,T \subseteq V(G^*)$, and by Theorem~\ref{thm:lovasz}, we conclude that  $G^*$ has an $(f,f)$-factor $F$. Deleting the loops of $F$, we obtain a spanning subgraph $F'$ of $G$ in which all v-vertices have even degree and all e-vertices have degree 2.  Thus $H$ admits an Euler family  by Theorem~\ref{thm:incidencegraph}.
\end{proof}

\section{Proof of the main result}\label{sec:main}

We shall now prove our main result, Theorem~\ref{thm:lcoveringkhypergraphs}. We use induction on $\ell$, and most of the work is required to prove the basis of induction, which we state below as Theorem~\ref{thm:2coveringkhypergraphs}.

\medskip

\begin{thm}\label{thm:2coveringkhypergraphs}
Let $k\geq 4$, and let $H$ be a 2-covering $k$-hypergraph with at least two edges.  Then $H$ is quasi-eulerian.
\end{thm}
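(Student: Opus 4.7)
The plan is to verify the hypotheses of Proposition~\ref{lem:relax} and conclude that $H$ is quasi-eulerian. Corollary~\ref{cor:smallcases} disposes of the cases $n\le 2k-3$ and $(k,n)=(4,6)$, so I may assume $n\ge 2k-2$, with $n\ne 6$ when $k=4$; a direct check then gives $n>3k/2$, making Lemma~\ref{lem:numedges} applicable. Lemma~\ref{lem:2-connected} provides the no-cut-edges hypothesis of Proposition~\ref{lem:relax}, so it remains only to show that for every $X\subseteq E(H)$ with $|X|\ge 2$,
\begin{equation*}
|X| \;\ge\; 2\Big\lfloor\frac{\c(G^{*}-X)+3}{k}\Big\rfloor.
\end{equation*}

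Write $q=\c(G^{*}-X)=\c(G-X)$ (loops at v-vertices do not affect connectivity), and let $V_1,\ldots,V_q$ be the v-vertex parts of the components of $G-X$. Call a component \emph{trivial} if $|V_i|=1$ (its unique v-vertex is incident only to edges in $X$) and \emph{non-trivial} otherwise (it contains an e-vertex, so $|V_i|\ge k$); write $q_1$ and $q_2=q-q_1$ for the numbers of each type. Since $H$ is 2-covering, every pair of v-vertices lying in distinct components of $G-X$ must be covered by some edge of $X$, and each such edge covers $\binom{k}{2}$ pairs, whence the key inequality
\begin{equation*}
|X|\binom{k}{2} \;\ge\; \binom{n}{2} - \sum_{i=1}^{q}\binom{|V_i|}{2}.
\end{equation*}

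I then split into three cases. If $q\le 2k-4$, then $2\lfloor(q+3)/k\rfloor\le 2\le|X|$. If $q_2=0$, every v-vertex is isolated in $G-X$, so $X=E(H)$ and $q=n$; Lemma~\ref{lem:numedges} then yields $|X|=m\ge 2\lfloor(n+3)/k\rfloor$. Otherwise $q\ge 2k-3$ and $q_2\ge 1$, forcing $n\ge q_1+q_2 k\ge q+k-1$; bounding $\sum\binom{|V_i|}{2}\le\binom{n-q+1}{2}$ (with one part absorbing all the excess, as suggested by Lemma~\ref{lem:notturan}) and substituting gives
\begin{equation*}
|X| \;\ge\; \frac{(q-1)(2n-q)}{k(k-1)} \;\ge\; \frac{(q-1)(q+2k-2)}{k(k-1)}.
\end{equation*}
Relaxing $\lfloor(q+3)/k\rfloor\le(q+3)/k$, the target inequality reduces to $q(q-1)\ge 8(k-1)$, which holds for all $q\ge 2k-3$ whenever $k\ge 5$.

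The main obstacle will be the borderline case $k=4$, where the algebraic inequality narrowly fails at $q=2k-3=5$ (since $20<24$). There I intend to exploit the integrality of $|X|$ together with $n\ge q+k-1=8$ to obtain $|X|\ge\lceil 11/3\rceil=4=2\lfloor 8/4\rfloor$, closing the gap; for $k=4$ and $q\ge 6$ the algebra already suffices. This single delicate subcase appears to be the only place where a softer argument is needed, so I expect the rest of the verification to follow routinely from the counting inequality above.
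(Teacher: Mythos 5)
Your proposal is correct, and while it follows the paper's overall skeleton (reduce via Corollary~\ref{cor:smallcases}, Lemma~\ref{lem:numedges} and Lemma~\ref{lem:2-connected} to verifying the hypothesis of Proposition~\ref{lem:relax}), the way you verify the condition $|X|\ge 2\lfloor(\c(G^*-X)+3)/k\rfloor$ is genuinely different from the paper's. The paper splits on whether $G^*-X$ has isolated v-vertices: when it does, it counts only the pairs with one isolated and one non-isolated endpoint, using the sharper per-edge bound $k^2/4$; when it does not, it counts all cross-component pairs, invokes Lemma~\ref{lem:notturan} with all parts of size at least $k$, and finishes with a discriminant analysis of a quadratic in $q-1$. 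You instead run a single unified count of all cross-component pairs with the trivial per-edge bound $\binom{k}{2}$ and the unconstrained smoothing bound $\sum_i\binom{|V_i|}{2}\le\binom{n-q+1}{2}$ (Lemma~\ref{lem:notturan} with $k=1$; note you implicitly use that every component of $G-X$ contains a v-vertex, which holds since each surviving e-vertex has $k$ v-neighbours), dispose of the all-isolated case by observing $X=E$, $q=n$ and quoting Lemma~\ref{lem:numedges}, and reduce the remaining case ($q\ge 2k-3$, some non-trivial component, hence $n\ge q+k-1$) to the clean inequality $q(q-1)\ge 8(k-1)$. I checked the arithmetic: this holds for $k\ge5$ at $q=2k-3$ and for $k=4$, $q\ge6$, and your integrality patch for the lone exception $k=4$, $q=5$ is valid, since then $n\ge q+3=8$ gives $|X|\ge(2n-5)/3\ge 11/3$, so $|X|\ge4=2\lfloor 8/4\rfloor$. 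The trade-off is that the paper's two tailored counting bounds avoid any borderline case but require heavier algebra, whereas your cruder unified bound yields much simpler algebra at the cost of one special subcase handled by rounding.
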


\begin{proof}
Let $H=(V,E)$ with $n=|V|$ and $m=|E|$. If $n\leq 2k-3$, then $H$ is eulerian by Corollary~\ref{cor:smallcases}, so we may assume that $n\geq 2k-2.$

If $n\leq\frac{3k}{2}$, it then follows that $(k,n)=(4,6)$.  Again, $H$ is eulerian by Corollary~\ref{cor:smallcases}. Hence  $n>\frac{3k}{2}$, and  Lemma~\ref{lem:numedges} implies that
$m \geq 2\big\lfloor\frac{n+3}{k}\big\rfloor$.

In the rest of the proof we show that $H$ satisfies the conditions of Proposition~\ref{lem:relax}.

Let $G^*$ be the graph obtained from  the incidence graph of $H$ by adjoining $2(m+n)^2$ loops to every v-vertex.

Fix any $X \subseteq E$ with $|X|\geq 2$, and denote $q=\c(G^*-X)$.

Suppose that $|X| < 2\Big\lfloor\frac{q+3}{k}\Big\rfloor$.
If $q \leq 2k-4$, then this supposition  implies that $|X| < 2$,  a contradiction. Hence we may assume that $q\geq 2k-3$, and hence $q \ge 5$. Moreover, our supposition implies
\begin{equation}\label{eq:X}
|X| \le 2\frac{q+3}{k}-1.
\end{equation}

%Note that any connected component of $G^*-X$ contains at least one v-vertex, so $q\leq n$.  Furthermore, any non-trivial connected component of $G^*-X$ has at least $k$ v-vertices.

Let $\ell$ denote the number of v-vertices that are isolated in $G^*-X$.

{\em Case 1: $\ell\geq 1$.}  If $\ell=n$, then $X=E$, $q = n$, and $|X|=|E| \geq 2\lfloor \frac{n+3}{k} \rfloor = 2\lfloor\frac{q + 3}{2}\rfloor$, contradicting our assumption on $X$.  Thus we may assume $\ell<n$, and hence $\ell <q$.

Since $G^*-X$ has  $q-\ell$ non-trivial connected components, each with at least $k$ v-vertices, we have
\begin{equation}\label{eqn:n0}
 n \geq \ell + k(q-\ell).
\end{equation}
Since $q>\ell$, this inequality also implies
\begin{equation}\label{eqn:n2}
 n\geq\ell + k.
\end{equation}

Let $S$ be the set of pairs $\{ u,v \}$ of v-vertices such that $u$ is isolated in $G^*-X$, and $v$ is not. Then $|S|=\ell(n-\ell)$. Observe that every edge of $H$ covers at most $\frac{k^2}{4}$ pairs from $S$, which implies that
$|X| \geq \frac{\ell(n-\ell)}{\frac{k^2}{4}}$.
Combining this inequality with (\ref{eq:X}), we obtain
\begin{equation}\label{eqn:master}
\frac{4\ell(n-\ell)}{k^2} \leq \frac{2q+6-k}{k}.
\end{equation}
Substituting $q \leq \ell+\frac{n-\ell}{k}$ from Inequality (\ref{eqn:n0}) and rearranging yields
$$ n(4\ell -2) \leq 4\ell^2 - k^2 + 2\ell k - 2\ell + 6k.$$
Further substituting $n\geq\ell + k$ from~(\ref{eqn:n2}) and isolating $\ell$, we obtain $\ell \leq 4 - \frac{k}{2}$, which implies
$\ell \in \{ 1,2 \}$ as $k \ge 4$.

However, if on the left-hand side of Inequality (\ref{eqn:master}) we apply $\frac{n-\ell}{k} \geq q - \ell$ from~(\ref{eqn:n0}) and simplify, then we obtain
$$(4\ell - 2)q - 4\ell^2 \leq 6 - k \le 2.$$
Now substituting either $\ell=1$ or $\ell=2$ yields $q\leq 3$, a contradiction.

{\em Case 2: $\ell=0$.}  Let $C_1,C_2,\dotso , C_q$ be the connected components of $G^*-X$, and let $n_i$ denote the number of v-vertices of $C_i$. Note that $n_i \ge k$ for all $i$.

The number of pairs of v-vertices  that lie in distinct connected components of $G^* -X$ is
$ \binom{n}{2} - \sum_{i=1}^{q}\binom{n_i}{2}$, and
these pairs must all be covered by the edges of $X$. As $n \ge qk$, $n_1+\ldots+n_q=n$, and $n_i \ge k$, for all $i$,we know that
$\sum_{i=1}^{q}\binom{n_i}{2} \le (q-1)\binom{k}{2} + \binom{n-k(q-1)}{2}$
by Lemma~\ref{lem:notturan}.
Therefore,
$$
\binom{n}{2} - \sum_{i=1}^{q}\binom{n_i}{2} \geq \binom{n}{2} - (q-1)\binom{k}{2}-\binom{n-k(q-1)}{2}.
$$
Since each edge of $X$ covers up to $\binom{k}{2}$ pairs of v-vertices in distinct connected components, we deduce that
$$
|X| \ge \frac{\binom{n}{2} - (q-1)\binom{k}{2}-\binom{n-k(q-1)}{2}}{\binom{k}{2}}.$$

On the other hand, by (\ref{eq:X}),  we have $|X|\leq\frac{2q+6-k}{k}$, so \begin{equation}\label{eq:case2}
\frac{\binom{n}{2} - (q-1)\binom{k}{2}-\binom{n-k(q-1)}{2}}{\binom{k}{2}}\leq\frac{2q+6-k}{k}.
\end{equation}

We now substitute $x=q-1$, noting that $x \ge 4$ as $q \ge 5$. Rearranging Inequality (\ref{eq:case2}), we then obtain
$$2kxn \le k^2x^2 + (k^2+2k-2)x-(k-8)(k-1).$$
Applying $n \ge qk=(x+1)k$ further yields
$$k^2x^2 +(k^2-2k+2)x+(k-8)(k-1) \le 0.$$
Denote the left-hand side by $f(x)=ax^2+bx+c$, where $a=k^2$, $b=k^2-2k+2$, and $c=(k-8)(k-1)$, and observe that $a, b > 0$ as $k \ge 4$. If $b^2-4ac<0$, then $f(x)>0$ for all $x$, a contradiction. Hence assume $b^2-4ac \ge 0$. Let $x_2$ be the larger of the two roots of $f(x)=0$. If $x_2<4$, then $f(x)>0$ for all $x \ge 4$, a contradiction. Hence we must have
$$4 \le \frac{-b+\sqrt{b^2-4ac}}{2a}.$$
Since $a,b >0$, it is straightforward to show that
$16a+4b+c \le 0$ follows. However,
$$ 16a+4b+c = k(21k-17)+16 >0,$$
a contradiction.

Since each case leads to a contradiction, we conclude that $|X|\geq \lfloor\frac{\c(G^*-X)+3}{k}\rfloor$. By Lemma~\ref{lem:2-connected}, hypergraph $H$ has no cut edges, so we may apply Proposition~\ref{lem:relax} to conclude that $H$ is quasi-eulerian.
\end{proof}

We are now ready to prove our main result, restated below. \\

\begin{customthm}{\ref{thm:lcoveringkhypergraphs}}
Let $\ell$ and $k$ be integers, $2 \le \ell < k$, and let $H$ be an $\ell$-covering $k$-hypergraph.  Then $H$ is quasi-eulerian if and only if it has at least two edges.
\end{customthm}

\begin{proof}
Since $H$ is non-empty, and since a hypergraph with a single edge does not admit a closed trail, necessity is easy to see.

To prove sufficiency, for $s \geq 1$ and $\ell \geq 2$, define the proposition
$$P_{s}(\ell): \mbox{ ``Every } \ell \mbox{-covering } (\ell+s) \mbox{-hypergraph with at least two edges is quasi-eulerian.''}$$
Theorem~\ref{thm:coveringinduction} implies that $P_{1}(\ell)$ holds for all $\ell \geq 2$.  Hence fix any $s \geq 2$.

We prove $P_{s}(\ell)$ by induction on $\ell$. As $\ell+s \ge 4$, the basis of induction, $P_{s}(2)$, follows from Theorem~\ref{thm:2coveringkhypergraphs}.  Suppose that, for some $\ell \geq 2$, the proposition $P_{s}(\ell)$ holds; that is, every $\ell$-covering $(\ell+s)$-hypergraph with at least two edges is quasi-eulerian.

Let $H=(V,E)$ be an $(\ell+1)$-covering $\big( (\ell+1 \big) +s)$-hypergraph with $|E| \geq 2.$  Fix any $v\in V$ and let $V^*=V-\{ v \}$. Define a mapping $\varphi: E \to 2^{V^*}$ by
$$\varphi(e)=e-\{ v \} \quad \mbox{ if } v \in e,$$
and otherwise,
$$\varphi(e)=e-\{ u \} \quad \mbox{ for any } u \in e.$$
Then let $E^*=\{ \varphi(e): e \in E \}$ and $H^*=(V^*,E^*)$, so that $\varphi$ is a bijection from $E$ to $E^*$. It is straightforward to verify that $H^*$ is an $\ell$-covering $(\ell+s)$-hypergraph. As $|E^*|=|E| \ge 2$, by induction hypothesis, hypergraph $H^*$ admits an Euler family $\F^*$. In each closed trail in $\F^*$, replace each $e \in E^*$ with $\varphi^{-1}(e)$ to obtain a set $\F$ of closed trails of $H$. It is not difficult to verify that $\F$ is an Euler family of $H$, so $P_s(\ell+1)$ follows.

By induction, we conclude that $P_{s}(\ell)$ holds for all $\ell \geq 2$, and any $s \ge 1$. Therefore, every $\ell$-covering $k$-hypergraph with at least two edges is quasi-eulerian.
\end{proof}

\small

\section*{Acknowledgements}

The first author gratefully acknowledges support by the Natural Sciences and Engineering Research Council of Canada (NSERC), Discovery Grant RGPIN-2016-04798.


\begin{thebibliography}{99}
%\bibitem{AK} J. Akiyama and M. Kano, {\em Factors and Factorizations of Graphs: Proof Techniques in Factor Theory}, Lecture Notes in Mathematics {\bf 2031} (2012).

\bibitem{BS1} M. A. Bahmanian  and M. \v{S}ajna, Connection and separation in hypergraphs, {\em Theory Appl. Graphs} {\bf 2} (2015), Art. 5, 24 pp.

\bibitem{BS2} M. A. Bahmanian and M. \v{S}ajna, Quasi-Eulerian hypergraphs, {\em Elec. J. Combin.} {\bf 24} (2017), \#P3.30, 12 pp.

\bibitem{BM} J. A. Bondy and U. S. R. Murty, {\em Graph Theory}, Springer, 2008.

%\bibitem{CD} C. J. Colbourn, J. H. Dinitz (eds.), {\em Handbook of Combinatorial Designs}, Chapman \& Hall/CRC, 2007.

%\bibitem{DS} M. Dewar and B. Stevens, {\em Ordering Block Designs: Gray Codes, Universal Cycles and Configuration Orderings}, CMS Books in Mathematics, Springer,  2012.

%\bibitem{E} L. Euler, Solutio problematis ad geometriam situs pertinentis, {\em Comment. Academiae Sci. Petropolitanae} {\bf 8} (1741), 128--140.

%\bibitem{HW} C. Hierholzer, C. Wiener, \"{U}ber die M\"{o}glichkeit, einen Linienzug ohne Wiederholung und ohne Unterbrechung zu umfahren, {\em  Math. Ann.} {\bf 6} (1873), 30--32.

%\bibitem{HH1} V. Horan, G. Hurlbert, Overlap cycles for Steiner quadruple systems, {\em J. Combin. Des.} {\bf 22} (2014),  53--70.

%{HH2} V. Horan, G. Hurlbert, 1-Overlap cycles for Steiner triple systems, {\em Des. Codes Cryptogr.} {\bf 72} (2014),  637--651.

\bibitem{LN} Z. Lonc and P. Naroski, On tours that contain all edges of a hypergraph, {\em Elec. J. Combin.} {\bf 17} (2010), \#R144, 31 pp.

%\bibitem{LL} L. Lov\'{a}sz, On graphs not containing independent circuits (in Hungarian), {\em Mat. Lapok} {\bf 16} (1965), 289--299.

\bibitem{L2}L. Lov\'{a}sz, The factorization of graphs II, {\em Acta Math. Acad. Sci. Hungar.} {\bf 23} (1972), 223--246.

\bibitem{SW} M. \v{S}ajna and A. Wagner, Triple systems are eulerian, {\em J. Combin. Des.} {\bf 25} (2017),  185--191.

\bibitem{SW1} M. \v{S}ajna and A. Wagner, {\em Covering hypergraphs are eulerian}, submitted, 24 pp. arXiv:2101.04561.

%\bibitem{W} Andrew Wagner, Eulerian Properties of Design Hypergraphs and Hypergraphs with Small Edge Cuts, {\em uO Research}, Theses, 2011- (2019). Retrieved from ruor.uottawa.ca/bitstream/10393/39092/5/Wagner\_Andrew\_2019\_thesis.pdf.

\end{thebibliography}
\end{document}